\newcommand{\CC}{\mathbb{C}}
\newcommand{\QQ}{\mathbb{Q}}
\newcommand{\cW}{\mathcal{W}}
\DeclareMathOperator{\Hom}{Hom}
\DeclareMathOperator{\Ind}{Ind}
\DeclareMathOperator{\cInd}{c-Ind}
\DeclareMathOperator{\Rep}{Rep}
\DeclareMathOperator{\Irrep}{Irrep}
\DeclareMathOperator{\Res}{Res}
\newtheorem{thm}{Theorem}[section]
\newtheorem{lemma}[thm]{Lemma}
\newtheorem{prop}[thm]{Proposition}
\newtheorem{cor}[thm]{Corollary}
\newtheorem{rmk}[thm]{Remark}
\newtheorem{obs}[thm]{Observation}
\title{Functorial Zeta Integrals}
\author{Gil Moss}
\date{\today}
\begin{document}
\maketitle

In this expository note, we translate the proof of \cite[Prop 2.11]{jps2} into the purely functorial language of parabolic induction-restriction in \cite{b-z,b-zI}. This proposition gives the functional equation of the nonarchimedean local Rankin-Selberg Euler factors introduced by Jacquet, Piatetski-Shapiro, and Shalika in \cite{jps2}. This new language gives a clearer presentation of the ideas.  After writing this note the author found the book \cite{explicit}, which employs similar techniques although they do not use the language of \cite{b-z,b-zI} as concretely as we do here, and they do not seem interested in extending to more arbitrary coefficient rings. The proof presented here works over arbitrary fields of characteristic $\ell\neq p$.

Let $F$ be a finite extension of $\QQ_p$ and let $k$ be a field of characteristic $\ell\neq p$. For any $n$ define $G_n$ and $P_n$ to be the groups $GL_n(F)$ and $\left(\begin{smallmatrix} g & x \\ 0 & 1 \end{smallmatrix}\right)$. Choose a nontrivial additive character $\psi:F\rightarrow k^{\times}$, extend it to unipotent upper triangular matrices as usual.

Let $\tau$ be in $\Rep_k(G_r)$ be irreducible generic and let $\tau'$ in $\Rep_k(G_t)$ be irreducible generic with $t<r$. Denote their Whittaker models by $\cW(\tau,\psi)$, and $\cW(\tau',\psi)$. We will identify $\tau$ with $\cW(\tau,\psi)$. We also recall that for a character $\chi\circ \det$, $\cW(\tau'\chi,\psi) = \{W'(g)\chi(\det g):W\in \cW(\tau',\psi)\}\cong \cW(\tau',\psi)\otimes(\chi\circ\det)$.

We make heavy use of the functors $\Phi^+$, $\Phi^-$, $\Psi^+$, $\Psi^-$ and $\hat{\Phi}^+$ constructed in \cite{b-z,b-zI}. See also \cite{cogshap} for a nice exposition of their definition and properties, or \cite{moss1} for a shorter summary.

The author would like to thank his thesis advisor David Helm for his helpful conversations on this subject, and Robert Kurinczuk and Nadir Matringe for their interest in an earlier version of these notes posted on the author's webpage in July 2014.

\section{If $Z_s(W,W')$ is rational, it naturally lives in a nice Hom space}

Treating the letter $s$ as an indeterminate, we can define a character $\chi_s:G\rightarrow k[s,s^{-1}]^{\times}$ by $g\mapsto s^{v_F(\det g)}$. Then we can define a bilinear pairing $Z_s:\cW(\tau) \times \cW(\tau'\otimes \chi_s)\rightarrow k((s))$ as follows:

$$Z_s(W,W') = \int_{N_t\backslash G_t} W\left(\begin{smallmatrix}g&0\\0&I_{r-t} \end{smallmatrix}\right)W'(g)\chi_s(g) dg$$
where $dg$ is a Haar measure on $N_t\backslash G_t$.

Let's assume for motivation that we know $Z_s(W,W')$ lives in the field $k(s)$ of rational functions. In \cite{jps2}, in the case of $k=\CC$, this is proved using analytic arguments and finiteness results on Whittaker functions. With such a rationality result, the set of $Z_s(W,W')$ as $W$ and $W'$ vary form a fractional $k[s,s^{-1}]$-ideal of $k(s)$. Since $k[s,s^{-1}]$ is a PID, this set has a single generator which is the $L$-function and which captures all the poles. We can substitute anything in $k^{\times}$ lying outside this finite set of poles as a value for the indeterminate $s$. This specializes $\chi_s$ to a character $G\rightarrow k^{\times}$. In this case $Z_s$ defines a bilinear pairing
$$\cW(\tau)\times \cW(\tau'\otimes \chi_s)\rightarrow k.$$

In fact $Z_s(W,W')$ is rational. A proof of this for $k$ being any Noetherian $W(\overline{\mathbb{F}_{\ell}})$-algebra, and $\tau$ and $\tau'$ being co-Whittaker $k[G]$-modules, will appear in the forthcoming work \cite{moss2} of the author. The theory of co-Whittaker modules is developed in \cite{h_whitt}, and also in \cite{moss1}. In particular, absolutely irreducible generic representations over a field $k$ of characteristic $\ell\neq p$ are co-Whittaker.

To get a better handle on what $Z_s$ is actually doing, we investigate the functor defined by taking a Whittaker function $W(x)$ on $G_r$ to its restriction $W\left(\begin{smallmatrix}g&0\\0&I_{r-t} \end{smallmatrix}\right)$ to $G_t$.

\begin{lemma}[\cite{cogshap} Prop 1.1]
\label{explicitphi}
For $1\leq l \leq r-1$,
$$(\Phi^-)^l\tau \cong \left \{W\left(\begin{smallmatrix}   p & 0 \\ 0 & I_{l}  \end{smallmatrix}\right) : W \in \tau,\ p\in P_{r-l}\right\}.$$
\end{lemma}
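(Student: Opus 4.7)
The plan is to proceed by induction on $l$, with the base case $l=1$ carrying the main content. For the base case, consider the restriction map $R: \cW(\tau, \psi) \to C^{\infty}(P_{r-1}, k)$ given by $R(W)(p) = W\left(\begin{smallmatrix} p & 0 \\ 0 & 1 \end{smallmatrix}\right)$. By construction $R$ is $P_{r-1}$-equivariant (for the right-regular action via the embedding $p \mapsto \left(\begin{smallmatrix} p & 0 \\ 0 & 1 \end{smallmatrix}\right)$), and its image is exactly the right-hand side of the lemma for $l=1$.

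The first task is to verify that $R$ factors through the $(U_r, \psi|_{U_r})$-coinvariants defining $\Phi^-(\cW(\tau, \psi)|_{P_r})$, where $U_r$ denotes the subgroup $\left\{\left(\begin{smallmatrix} I_{r-1} & x \\ 0 & 1 \end{smallmatrix}\right) : x \in F^{r-1}\right\}$. This is a short computation: for such a $u$, commuting $u$ past $\left(\begin{smallmatrix} p & 0 \\ 0 & 1 \end{smallmatrix}\right)$ produces an element of $N_r$, and the Whittaker transformation property of $W$ then makes $R(\pi(u)W - \psi|_{U_r}(u)W)$ vanish identically.

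The main obstacle is verifying injectivity of the induced map $\bar R: \Phi^-(\cW(\tau,\psi)|_{P_r}) \to \operatorname{image}(R)$, i.e., that no further relations beyond the coinvariant ones are introduced. I would invoke the Kirillov-type structure theorem for irreducible generic representations, which identifies $\cW(\tau, \psi)|_{P_r} \cong (\Phi^+)^{r-1}\Psi^+(k)$ as $P_r$-representations (with $k$ the trivial representation of the trivial group $G_0$). The standard Bernstein-Zelevinsky composition identity $\Phi^-\Phi^+ \cong \operatorname{id}$ then yields $\Phi^-(\cW(\tau,\psi)|_{P_r}) \cong (\Phi^+)^{r-2}\Psi^+(k)$, and unwinding the compact inductions defining $\Phi^+$ and $\Psi^+$ identifies this target concretely with $\operatorname{image}(R)$, forcing $\bar R$ to be an isomorphism.

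For the inductive step, the same Kirillov identification propagates: $(\Phi^-)^{l-1}\tau \cong (\Phi^+)^{r-l}\Psi^+(k)$, and one more application of $\Phi^-\Phi^+ \cong \operatorname{id}$ gives $(\Phi^-)^l\tau \cong (\Phi^+)^{r-l-1}\Psi^+(k)$. Tracing through the explicit form of $\Phi^+$ as compact induction, each successive application inserts a row/column carrying the Whittaker character $\psi$, so the resulting space is exactly $\left\{W\left(\begin{smallmatrix} p & 0 \\ 0 & I_l \end{smallmatrix}\right) : W \in \cW(\tau,\psi),\ p \in P_{r-l}\right\}$, completing the induction.
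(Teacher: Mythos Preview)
The paper does not give its own proof of this lemma; it is simply quoted from \cite{cogshap}, Proposition~1.1. So there is no in-paper argument to compare against, and I will evaluate your proposal on its own merits.

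Your setup is sound: the restriction map $R$ is $P_{r-1}$-equivariant, surjects onto the right-hand side, and your check that it kills $\pi(u)W-\psi(u)W$ for $u\in U_r$ is correct, so $\bar R$ is well defined on $\Phi^-(\tau|_{P_r})$. The gap is in the injectivity step. You assert that for irreducible generic $\tau$ one has
\[
\cW(\tau,\psi)|_{P_r}\ \cong\ (\Phi^+)^{r-1}\Psi^+(k)\ =\ \cInd_{N_r}^{P_r}\psi
\]
as $P_r$-representations. This is false unless $\tau$ is cuspidal. In general $\tau|_{P_r}$ carries the Bernstein--Zelevinsky filtration with successive quotients $(\Phi^+)^{i-1}\Psi^+(\tau^{(i)})$ for $i=1,\dots,r$, and the lower derivatives $\tau^{(i)}$ with $i<r$ need not vanish (this very filtration is what drives the proof of Proposition~\ref{uniquenessofzeta} in the paper). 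For example, an irreducible principal series of $G_2$ has $\tau^{(1)}$ two-dimensional, so $\tau|_{P_2}$ strictly contains $\Phi^+\Psi^+(k)$. Hence applying $\Phi^-\Phi^+\cong\mathrm{id}$ to your claimed description computes the wrong object: it would give $(\Phi^+)^{r-2}\Psi^+(k)=\cInd_{N_{r-1}}^{P_{r-1}}\psi$, whereas $\Phi^-\tau$ is genuinely larger when $\tau$ is not cuspidal, matching the fact that restricted Whittaker functions of non-cuspidal $\tau$ are not compactly supported modulo $N_{r-1}$. Your inductive step inherits the same error.

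The argument in \cite{cogshap} avoids any structure theorem for $\tau|_{P_r}$. Injectivity is proved directly from Jacquet's characterization of twisted coinvariants: $W$ dies in $\Phi^-$ if and only if $\int_K\psi^{-1}(u)\,\pi(u)W\,du=0$ for some compact open $K\subset U_r$, and one checks by an explicit computation with the Whittaker transformation law and smoothness of $W$ that vanishing of $W$ on the embedded $P_{r-1}$ forces this projector to annihilate $W$ for $K$ large enough. That route works uniformly for all generic $\tau$.
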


Notice that the integral $Z_s(W,W')$ is determined in its first variable by the values $W(
\begin{smallmatrix}
g & 0 \\
0 & I_{r-t}
\end{smallmatrix}
)$ for $g\in G_t$.  But observe that as $W$ ranges over $\tau$, these values entirely determine the $P_{t+1}$-representation $$(\Phi^-)^{r-t-1}\tau \cong \left \{W\left(\begin{smallmatrix}   p & 0 \\ 0 & I_{r-t-1}  \end{smallmatrix}\right) : W \in \tau,\ p\in P_{t+1}\right\}$$
because for $p=(\begin{smallmatrix}g&0\\0&1\end{smallmatrix})(\begin{smallmatrix}I_t& x\\0&1\end{smallmatrix})$ in $P_{t+1} = G_tU_{t+1}$ where $U_{t+1}=\left(\begin{smallmatrix}&&& x_1\\& I_t &&\vdots \\ &&& x_t \\ 0 & \cdots &0 & 1\end{smallmatrix}\right)$, we have
$$W(\begin{smallmatrix}p&0\\0&I_{r-t-1}\end{smallmatrix}) = W\left((\begin{smallmatrix}g&0&0\\0&1&0\\0&0&I_{r-t-1}\end{smallmatrix})(\begin{smallmatrix}1& x&0\\0&1&0\\0&0&I_{r-t-1}\end{smallmatrix})\right)=\psi(\begin{smallmatrix}I_t&gx\\0&1\end{smallmatrix})W(\begin{smallmatrix}g&0\\0&I_{r-t}\end{smallmatrix}).$$ Considering the $P_{t+1}$-action by right translation on $(\Phi^-)^{r-t-1}\tau$, we have the following:
\begin{obs}
\label{restrictiontoGmodel}
The restriction of $(\Phi^-)^{r-t-1}\tau$ to the subgroup $G_t$ is isomorphic via the restriction of functions map to the space
$\left\{W\left(\begin{smallmatrix}   g & 0 \\ 0 & I_{r-t}  \end{smallmatrix}\right) : W \in \tau,\ g\in G_t\right\}$ in $\Rep(G_t)$.
\end{obs}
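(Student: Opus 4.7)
The plan is to verify, using Lemma \ref{explicitphi} to identify $(\Phi^-)^{r-t-1}\tau$ with its explicit model as a space of functions on $P_{t+1}$, that the restriction-of-functions map
$$\rho : (\Phi^-)^{r-t-1}\tau \longrightarrow \left\{W\left(\begin{smallmatrix} g & 0 \\ 0 & I_{r-t}\end{smallmatrix}\right) : W\in\tau,\ g\in G_t\right\}$$
obtained by evaluating along the embedded subgroup $G_t\hookrightarrow P_{t+1}$, $g\mapsto \left(\begin{smallmatrix}g & 0\\ 0 & 1\end{smallmatrix}\right)$, is a well-defined $G_t$-equivariant bijection.

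Surjectivity is essentially a tautology: the target is by definition the image of $\rho$. Injectivity is where the genuine content lies, and it is already contained in the $\psi$-equivariance computation performed in the paragraph immediately above the Observation. Indeed, the factorization $P_{t+1}=G_tU_{t+1}$ means any $p\in P_{t+1}$ can be written as $\left(\begin{smallmatrix}g & 0\\ 0 & 1\end{smallmatrix}\right)\left(\begin{smallmatrix}I_t & x\\ 0 & 1\end{smallmatrix}\right)$, and the computation shows
$$W\left(\begin{smallmatrix}p & 0\\ 0 & I_{r-t-1}\end{smallmatrix}\right) = \psi\left(\begin{smallmatrix} I_t & gx\\ 0 & 1\end{smallmatrix}\right) W\left(\begin{smallmatrix} g & 0\\ 0 & I_{r-t}\end{smallmatrix}\right).$$
Thus if the $G_t$-restriction of some class in $(\Phi^-)^{r-t-1}\tau$ vanishes, i.e.\ $W\left(\begin{smallmatrix}g&0\\0&I_{r-t}\end{smallmatrix}\right)=0$ for all $g\in G_t$, then $W\left(\begin{smallmatrix}p & 0\\ 0 & I_{r-t-1}\end{smallmatrix}\right)=0$ for all $p\in P_{t+1}$, and hence the underlying element of $(\Phi^-)^{r-t-1}\tau$ is zero.

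Finally, $G_t$-equivariance is a bookkeeping check. The $P_{t+1}$-action on $(\Phi^-)^{r-t-1}\tau$ is by right translation, and restricting this action to the subgroup $G_t \subset P_{t+1}$ produces on the target side precisely right translation by matrices of the form $\left(\begin{smallmatrix}g & 0\\ 0 & I_{r-t}\end{smallmatrix}\right) \in G_r$, which is the standard $G_t$-action on the image space. I expect no substantive obstacle: all the essential work has been done in the preparatory $\psi$-equivariance calculation, so the proof of the Observation reduces to packaging that computation together with the identification in Lemma \ref{explicitphi}.
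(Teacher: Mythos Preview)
Your proposal is correct and matches the paper's approach exactly: the paper does not give a separate proof of the Observation but simply presents the $\psi$-equivariance computation you cite (together with Lemma~\ref{explicitphi}) as its justification. You have merely made explicit the injectivity, surjectivity, and $G_t$-equivariance that the paper leaves implicit in that computation.
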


In particular, this shows that $Z_s$ is an element of $$\Hom_{G_t}(\Res^{P_{t+1}}_{G_t}(\Phi^-)^{r-t-1}\tau,[\cW(\tau'\otimes \chi_s)]^{\widetilde{\ \ \ }}).$$

\section{This Hom space is one-dimensional}
\label{homspace}

\begin{prop}
\label{uniquenessofzeta}
For $\tau$ in $\Rep_k(G_r)$ and $\tau'$ in $\Rep_k(G_t)$ both irreducible generic, $\Hom_{G_t}(\Res^{P_{t+1}}_{G_t}(\Phi^-)^{r-t-1}\tau,[\cW(\tau'\otimes \chi_s)]^{\widetilde{\ \ \ }})$ has dimension $\leq 1$ except for possibly finitely many values of $s$.
\end{prop}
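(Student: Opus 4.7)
The plan is to iterate the Bernstein-Zelevinsky Kirillov exact sequence for restriction along the semidirect product $P_{t+1}=G_t\ltimes U_{t+1}$, reducing the given Hom space to a sum of Hom spaces between BZ-derivatives of $\tau$ and derivatives of $\tau'\otimes\chi_s$, then eliminating all but one summand by comparing central characters and exploiting the $s$-dependence of the $\chi_s$-twist. The key tool is the short exact sequence of $G_t$-modules
$$0\to \cInd_{P_t}^{G_t}\Phi^-V\to \Res^{P_{t+1}}_{G_t}V\to \Psi^-V\to 0,$$
valid for every $V\in \Rep(P_{t+1})$. It comes from the Mackey analysis using the two $G_t$-orbits on the character group of $U_{t+1}$: the trivial character (stabilizer $G_t$, contributing $\Psi^-V$) and the unique nontrivial orbit (stabilizer $P_t$, contributing $\Phi^-V$ as a $P_t$-module).

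Apply this with $V=(\Phi^-)^{r-t-1}\tau$, so that $\Psi^-V=\tau^{(r-t)}$ and $\Phi^-V=(\Phi^-)^{r-t}\tau$. Writing $W:=\widetilde{\cW(\tau'\otimes\chi_s)}$ and applying $\Hom_{G_t}(-,W)$ together with Frobenius reciprocity for compact induction yields
$$\dim \Hom_{G_t}(\Res^{P_{t+1}}_{G_t}(\Phi^-)^{r-t-1}\tau,W)\leq \dim \Hom_{G_t}(\tau^{(r-t)},W)+\dim \Hom_{P_t}((\Phi^-)^{r-t}\tau,W|_{P_t}).$$
The first summand vanishes for all but finitely many $s$: the $G_t$-module $\tau^{(r-t)}$ is $s$-independent of finite length, while the central character of $W$ evaluated on a uniformizer times the identity contains the nontrivial factor $s^{-t}$, so equating central characters against each composition factor of $\tau^{(r-t)}$ is a polynomial condition in $s$.

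For the second summand, use the BZ filtration of $W|_{P_t}$, whose successive quotients are $(\Phi^+)^{k-1}\Psi^+W^{(k)}$ for $k=1,\ldots,t$. Applying $\Hom_{P_t}((\Phi^-)^{r-t}\tau,-)$ to each subquotient and unwinding using the adjunctions between $(\Phi^-,\Phi^+)$ and between $(\Psi^-,\Psi^+)$ identifies each contribution with
$$\Hom_{G_{t-k}}(\tau^{(r-t+k)},W^{(k)}).$$
For $k<t$, the central character of $W^{(k)}$ retains the nontrivial factor $s^{-(t-k)}$, so again these vanish for all but finitely many $s$. The $k=t$ contribution is $\Hom_{G_0}(\tau^{(r)},W^{(t)})$, which is one-dimensional since for irreducible generic $\tau$ and $\tau'\otimes\chi_s$ the top derivatives $\tau^{(r)}$ and $W^{(t)}$ are each the one-dimensional trivial $G_0$-module.

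The main technical work is the iterative bookkeeping in the second step: one must verify that each piece of the BZ filtration of $W|_{P_t}$ unwinds as claimed under the repeated adjunctions, and that the central character matching in each case reduces to a polynomial condition in $s$ with finitely many zeros. Since the derivatives $\tau^{(k)}$ and $W^{(k)}$ are not irreducible in general, the central character analysis must descend to the level of composition factors, but this still leaves only finitely many exceptional $s$ overall.
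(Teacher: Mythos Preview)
Your approach is essentially correct and takes a genuinely different route from the paper. The paper does not use your short exact sequence for $\Res^{P_{t+1}}_{G_t}$; instead it chooses a $P_r$-composition series of $\tau|_{P_r}$, reduces to the case of an \emph{irreducible} $P_r$-module $\pi$, and then proves a separate key lemma (by induction on $r$, using the classification $\pi=(\Phi^+)^j\Psi^+\rho$ and the identity $\Res^{P_{t+1}}_{G_t}\hat\Phi^+\cong\Ind_{P_t}^{G_t}$) that the relevant Hom space vanishes unless $\pi$ is the generic constituent. Your argument bypasses both the composition series and the induction, going straight to the bound $\sum_k\dim\Hom_{G_{t-k}}(\tau^{(r-t+k)},W^{(k)})$; this makes the role of the derivatives completely transparent and is arguably cleaner. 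The paper's route, on the other hand, isolates exactly which irreducible $P_r$-constituent carries the functional, which is conceptually useful.

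Two technical points deserve tightening. First, what you call ``Frobenius reciprocity for compact induction'' at the step $\Hom_{G_t}(\cInd_{P_t}^{G_t}X,W)\to\Hom_{P_t}(X,W|_{P_t})$ is not an adjunction in general for the mirabolic (this is not a parabolic, so second adjointness is not available); but the evaluation-at-identity map is always \emph{injective}, and that inequality is all you use. Second, when you ``unwind using the adjunctions between $(\Phi^-,\Phi^+)$'' on the target, note that $\Phi^+$ is the \emph{left} adjoint of $\Phi^-$, so $\Hom_{P_t}(X,\Phi^+Y)$ does not simplify directly. What works is to embed $\Phi^+\hookrightarrow\hat\Phi^+$ and use the adjunction $(\Phi^-,\hat\Phi^+)$ together with $(\Psi^-,\Psi^+)$; this yields
\[
\Hom_{P_t}\bigl(X,(\Phi^+)^{k-1}\Psi^+W^{(k)}\bigr)\hookrightarrow\Hom_{G_{t-k}}\bigl(\Psi^-(\Phi^-)^{k-1}X,\,W^{(k)}\bigr)=\Hom_{G_{t-k}}\bigl(\tau^{(r-t+k)},W^{(k)}\bigr),
\]
which is the bound you want. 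With these two adjustments your central-character elimination (using $(\tau'\chi_s)^{(k)}\cong(\tau')^{(k)}\otimes\chi_s$ and finite length of the derivatives) goes through and the proof is complete.
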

\begin{proof}
Since $\tau$ is irreducible $\Res^{G_t}_{P_t}\tau$ is finite length. Hence it has a $P_r$-composition series $0=\tau_0\subset \cdots \subset \tau_l = \tau$. If $(\Phi^-)^{r-t-1}\tau=0$ we are done. Otherwise there is a nontrivial filtration $(\Phi^-)^{r-t-1}\tau_0\subset \cdots \subset (\Phi^-)^{r-t-1}\tau_l$. Fix an element $B$ in $\Hom_{G_t}((\Phi^-)^{r-t-1}\tau,[\cW(\tau')\otimes \chi_s]^{\widetilde{\ \ \ }})$. Then if $B$ is nonzero there is some $1\leq \lambda\leq l$ such that $B|_{(\Phi^-)^{r-t-1}\tau_{\lambda}}\not\equiv 0$ but $B|_{(\Phi^-)^{r-t-1}\tau_{\lambda-1}}\equiv 0$. Thus $B$ defines a nonzero element of $$\Hom_{G_t}((\Phi^-)^{r-t-1}\pi,[\cW(\tau')\otimes \chi_s]^{\widetilde{\ \ \ }})$$ where $\pi$ is the irreducible $P_r$ representation $\tau_{\lambda}/\tau_{\lambda-1}$. By Lemma \ref{hardlemma} below, the only way $B$ is nonzero in this situation is if $\pi$ is irreducible generic. 

Since $\tau$ is generic, the filtration $0=\tau_0^{(r)}\subset\cdots \subset \tau_l^{(r)}=\tau^{(r)}$ has exactly one nontrivial composition factor. Since we have discovered that $\pi^{(r)}=(\tau_{\lambda}/\tau_{\lambda-1})^{(r)}\neq 0$, this composition factor must be $\pi$.

Since $\tau_{\lambda-i}/\tau_{\lambda-i-1}$ is non-generic for $i\geq 1$ we can apply Lemma \ref{hardlemma} again to show that every element of $\Hom_{G_t}((\Phi^-)^{r-t-1}\tau|_{G_t},[\cW(\tau')\otimes \chi_s]^{\widetilde{\ \ \ }})$ must vanish on $(\Phi^-)^{r-t-1}\tau_{\lambda-i}$ for $i\geq 1$ since no nonzero homomorphisms can occur on $$(\Phi^-)^{r-t-1}(\tau_{\lambda-i}/\tau_{\lambda-i-1}).$$ Moreover, if $B$ vanishes on $(\Phi^-)^{r-t-1}\tau_{\lambda}$, it must vanish on all of $(\Phi^-)^{r-t-1}\tau$ (no nonzero homomorphisms can occur on $(\Phi^-)^{r-t-1}(\tau_{\lambda+i+1}/\tau_{\lambda+i})$ for $i\geq 0$).

Thus if there were another nonzero element $B'$ of $\Hom_{G_t}((\Phi^-)^{r-t-1}\tau,[\cW(\tau')\otimes \chi_s]^{\widetilde{\ \ \ }})$, there would be some number $c$ such that $B-cB'$ is zero on the subspace $(\Phi^-)^{r-t-1}\tau_{\lambda}$, hence zero on all of $(\Phi^-)^{r-t-1}\tau$. This proves that if the space is nonzero it is one-dimensional, except possibly (since we used Lemma \ref{hardlemma}) for finitely many values of $s$.
\end{proof}

\begin{lemma}
\label{hardlemma}
Let $\pi$ in $\Rep_k(P_r)$ be irreducible and $\tau'$ in $\Rep_k(G_t)$ be irreducible. Except for finitely many values of $s$, the space $\Hom_{G_t}((\Phi^-)^{r-t-1}\pi,[\tau'\otimes \chi_s]^{\widetilde{\ \ \ }})$ equals zero unless $\pi$ and $\tau'$ are generic in which case it has dimension 1.
\end{lemma}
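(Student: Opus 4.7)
The plan is to exploit the Bernstein--Zelevinsky classification of irreducible $P_r$-representations and then analyze the restriction to $G_t$ via Mackey and Frobenius reciprocity.

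First I would use \cite{b-z} to write $\pi \cong (\Phi^+)^k\Psi^+\sigma$ for a unique $0 \le k \le r-1$ and unique irreducible $\sigma \in \Rep_k(G_{r-k-1})$; recall that $\pi$ is generic (i.e.\ $\pi^{(r)}\neq 0$) precisely when $k = r-1$, in which case $\sigma$ is a character of $G_0$. The standard identities $\Phi^-\Phi^+\cong\mathrm{id}$ and $\Phi^-\Psi^+=0$ then collapse the iterated derivative:
\[
(\Phi^-)^{r-t-1}\pi \;\cong\; \begin{cases} 0 & k < r-t-1,\\ (\Phi^+)^{m}\Psi^+\sigma,\ m = k-r+t+1, & k \ge r-t-1,\end{cases}
\]
with $\sigma \in \Rep_k(G_{t-m})$. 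The first case kills the Hom space immediately, so only the second case requires work.

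For $m = 0$ the restriction of $\Psi^+\sigma$ to $G_t$ is just $\sigma$, so Schur's lemma bounds the Hom space by one, with equality iff $\sigma \otimes \chi_s \cong \tilde\tau'$; since $\chi_s$ varies nontrivially in $s$ this holds for at most one value of $s$, and in this range $\pi$ is non-generic whenever $t < r-1$, in line with the claim. For $m \ge 1$ I would iterate the Mackey identity $\Res^{P_{n+1}}_{G_n}\Phi^+\rho \cong \cInd^{G_n}_{P_n}\rho$ (applied to the factorization $P_{n+1} = G_n\cdot P_nV_{n+1}$ with intersection $P_n$) to obtain
\[
\Res^{P_{t+1}}_{G_t}(\Phi^+)^m\Psi^+\sigma \;\cong\; \cInd^{G_t}_{H}(\sigma \boxtimes \Theta_m),
\]
where $H = G_{t-m}\ltimes U$ with $U$ the upper unipotent block of size $m$ and $\Theta_m$ the Whittaker-type character on $U$ produced by the successive $\Phi^+$'s. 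Frobenius reciprocity then rewrites the Hom space as a $\Theta_m$-twisted Jacquet functional space for $\tilde\tau'$, which standard BZ-derivative theory identifies with a derivative of $\tilde\tau'$ as a $G_{t-m}$-representation.

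Assembling: in the fully generic configuration ($k = r-1$, $m = t$, $\sigma$ trivial, $\tau'$ generic) the Hom space becomes the space of Whittaker functionals on $\tilde\tau'$ (since $\chi_s$ is trivial on the maximal unipotent $N_t$), which is one-dimensional by uniqueness of the Whittaker model. In any other configuration the Hom either vanishes outright or equates the $\chi_s$-twist of $\sigma$ to a specific irreducible subquotient of a derivative of $\tilde\tau'$, which can happen for only finitely many $s$. The main obstacle I expect is the middle step: making the Mackey--Frobenius calculation precise enough to identify $(\tilde\tau')_{U,\Theta_m}$ with a genuine BZ derivative of $\tilde\tau'$. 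Once that identification is in hand, both the one-dimensionality in the generic case and the ``finitely many $s$'' vanishing in the non-generic cases follow, respectively, from uniqueness of the Whittaker functional and from Schur's lemma applied to $\chi_s$-twists.
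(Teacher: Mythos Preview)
Your opening moves coincide with the paper's: write $\pi=(\Phi^+)^k\Psi^+\sigma$, kill $k<r-t-1$ via $\Phi^-\Psi^+=0$, and for $k=r-t-1$ use Schur (note: the unramified-twist action on irreducibles has \emph{finite} stabilizers, not trivial ones, so ``at most one $s$'' should read ``finitely many $s$''). For $m\ge 1$ you take a genuinely different route. The paper does not unwind $(\Phi^+)^m\Psi^+\sigma$ as a compact induction; instead it dualizes to swap the two arguments of the $\Hom$, then uses the identity $\Res^{P_{t+1}}_{G_t}\hat\Phi^+\cong\Ind^{G_t}_{P_t}$ (Lemma~\ref{phiplusandrestriction}) together with Frobenius reciprocity and the adjunction $(\Phi^-,\hat\Phi^+)$ to transport the $\Phi^+$'s onto $\tau'$, landing in a $\Hom$ space of exactly the same shape but with the roles of $\pi$ and $\tau'$ interchanged and $r$ strictly smaller. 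It then filters $\tau'|_{P_t}$ and invokes the induction hypothesis on $r$. Your approach avoids this induction at the cost of a single larger Mackey computation.

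That computation needs more care than you indicate, and this is precisely the obstacle you flag. The identity $\Res^{P_{n+1}}_{G_n}\Phi^+\cong\cInd^{G_n}_{P_n}$ is correct (it is the compact-induction twin of Lemma~\ref{phiplusandrestriction}), but it does not literally iterate: after one application you are inside $\cInd^{G_t}_{P_t}(-)$ with no further restriction to perform. What does work is to first use transitivity of compact induction to write $(\Phi^+)^m\Psi^+\sigma\cong\cInd^{P_{t+1}}_{G_{t-m}V}(\sigma\boxtimes\theta)$ with $V=U_{t-m+1}\cdots U_{t+1}$, and then apply Mackey once to the restriction $\Res^{P_{t+1}}_{G_t}$. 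Beware that $\theta$ is \emph{trivial} on $U_{t-m+1}$ (this is the contribution of $\Psi^+$), so your $\Theta_m$ is degenerate on one simple root and the resulting twisted Jacquet module is not a pure Whittaker coinvariant but rather the object computing the derivative $(\widetilde{\tau'})^{(m)}$ together with its $G_{t-m}$-structure. Once that identification is made precise, finite length of derivatives gives the ``finitely many $s$'' vanishing in the non-generic cases, and uniqueness of Whittaker models gives dimension one when $k=r-1$ and $\tau'$ is generic, as you say. So your plan is viable; the paper's recursive swap trades this bookkeeping for a shorter reduction step at the price of an induction on $r$.
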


\begin{proof}[Proof Attempt]
We use induction on $r$ starting with $r=2$, $t=1$. The base case is treated clearly in \cite[2.11 Lemma]{jps2}.

By \cite[3.5 Cor]{b-zI} since $\pi$ is irreducible we can write $\pi = (\Phi^+)^j\Psi^+\rho$ where $\rho$ is an irreducible representation of $G_{r-j-1}$.

Suppose $r-t-1 > j$. 

Then using \cite[3.2 Prop]{b-zI} we get
\begin{align*}
&\Hom_{G_t}((\Phi^-)^{r-t-1}\pi,\tau'\otimes \chi_s]^{\widetilde{\ \ \ }}) = \\
&\Hom_{G_t}((\Phi^-)^{r-t-1-j}\Psi^+(\rho),[\tau'\otimes \chi_s]^{\widetilde{\ \ \ }})=\\
&\Hom_{G_t}(0,[\tau'\otimes \chi_s]^{\widetilde{\ \ \ }})= 0,
\end{align*}
which shows that $B$ cannot have been nonzero.

Suppose $r-t-1=j$.

Then \cite[3.2 Prop]{b-zI} shows that
$$\Hom_{G_t}((\Phi^-)^{r-t-1}\pi|_{G_t},[\tau'\otimes \chi_s]^{\widetilde{\ \ \ }})  = \Hom_{G_t}(\Psi^+\rho|_{G_t},[\tau'\otimes \chi_s]^{\widetilde{\ \ \ }}).$$
But $\Psi^+\rho$ is the representation of $P_{t+1}$ by inflating $\rho$ to $U_{t+1}$ via the trivial character. Thus $\Psi^+\rho|_{G_t}=\rho$, and we have that $B$ defines an element of
$$\Hom_{G_t}(\rho,[\tau'\otimes \chi_s]^{\widetilde{\ \ \ }}).$$
Since $\rho$ and $\tau'$ are irreducible, this space is nonzero precisely when $\rho\cong  [\tau'\otimes\chi_s]^{\widetilde{\ \ \ }}$. Since the action of the group of unramified characters $F^{\times}\rightarrow k^{\times}$ on $\Irrep(G_{r-j-1})$ has finite stabilizers, we see that $\rho\cong \chi_s^{-1}\otimes\widetilde{\tau'}$ for only finitely many values of $s$.

Suppose $r-t-1< j$.

Using Lemma \ref{phiplusandrestriction} (proven below), we can write down the following chain of isomorphisms:
\begin{align*}
&\Hom_{G_t}(\Res^{P_{t+1}}_{G_t}(\Phi^-)^{r-t-1}\pi,[\tau'\chi_s]^{\widetilde{\ \ \ }}) &\text{expanding $\pi$}\\
&=\Hom_{G_t}(\Res^{P_{t+1}}_{G_t}(\Phi^+)^{j-(r-t-1)}\Psi^+\rho,[\tau'\chi_s]^{\widetilde{\ \ \ }}) & \text{\cite[3.2 Prop]{b-zI}} \\
&=\Hom_{G_t}(\tau', [\Res^{P_{t+1}}_{G_t}(\Phi^+)^{j-(r-t-1)}\Psi^+\rho \chi_s^{-1}]^{\widetilde{\ \ \ }})& \text{swapping the order} \\
&=\Hom_{G_t}(\tau',\Res^{P_{t+1}}_{G_t}(\hat{\Phi}^+)^{j-(r-t-1)}[\Psi^+\rho\chi_s^{-1}\Delta^{-1}]^{\widetilde{\ \ \ }}) &\text{\cite[2.25(c)]{b-z}}\\
&=\Hom_{G_t}(\tau',\Ind_{P_t}^{G_t}(\hat{\Phi}^+)^{j-(r-t)}[\Psi^+\rho\chi_s^{-1}\Delta^{-1}]^{\widetilde{\ \ \ }}) &\text{by Lemma \ref{phiplusandrestriction} below}\\
&=\Hom_{P_t}(\Res^{G_t}_{P_t}\tau',(\hat{\Phi}^+)^{j-(r-t)}[\Psi^+\rho\chi_s^{-1}\Delta^{-1}]^{\widetilde{\ \ \ }})&\text{by Frobenius reciprocity}\\
&=\Hom_{P_{r-j}}((\Phi^-)^{j-(r-t)}\tau', [\Psi^+\rho\chi_s^{-1}\Delta^{-1}]^{\widetilde{\ \ \ }}) & \text{\cite[3.2 Prop]{b-zI}}\\
\end{align*}
where $\Delta$ is the modulus character for an appropriate subgroup, depending only on $j-(r-t-1)$.

Consider the sub-case $j<r-1$.

Let $B'$ be the image of $B$ in $\Hom_{P_{r-j}}((\Phi^-)^{j-(r-t)}\tau', [\Psi^+\rho\chi_s^{-1}\Delta^{-1}]^{\widetilde{\ \ \ }})$. 

Since $\tau'$ is irreducible, $\tau'|_{P_t}$ is finite length with a decomposition series $0=\tau'_0\subset\cdots\subset\tau'_{l'}=\tau'$. If $(\Phi^-)^{j-(r-t)}\tau' = 0$ then we're done; otherwise there is a nontrivial filtration $(\Phi^-)^{j-(r-t)}\tau'_0\subset\cdots\subset(\Phi^-)^{j-(r-t)}\tau'_{l'}$. If $B$ were nonzero then $B'$ would also be nonzero hence $B'$ defines a homomorphism on 
$$\frac{(\Phi^-)^{j-(r-t)}\tau'_{\lambda}}{(\Phi^-)^{j-(r-t)}\tau'_{\lambda-1}} = (\Phi^-)^{j-(r-t)}(\tau'_{\lambda}/\tau'_{\lambda-1})$$ with $\lambda$ being the maximal index such that $B'|_{\tau'_{\lambda}}\not\equiv 0$. Denote by $\pi'$ the irreducible representation $\tau'_{\lambda}/\tau'_{\lambda-1}$, then $B'$ defines an element of 
$$\Hom_{P_{r-j}}((\Phi^-)^{j-(r-t)}\pi', [\Psi^+\rho\chi_s \Delta^{-1}]^{\widetilde{\ \ \ }}),$$ which embeds in
$$\Hom_{G_{r-j-1}}((\Phi^-)^{j-(r-t)}\pi'|_{G_{r-j-1}}, [\rho\chi_s\Delta^{-1}]^{\widetilde{\ \ \ }})$$ via forgetting the $U_{r-j}$-equivariance.
Since $\pi'$ is irreducible and $\rho$ is irreducible, we find by the induction hypothesis that (except for finitely many $s$ values) this space is zero except when $\pi'$ and $\rho$ are generic, in which case it is one dimensional. Thus it suffices to prove that $B'$ is zero when $\pi'\cong (\Phi^+)^{t-1}(1)$ and $\rho^{(n)}$ is one dimensional. Then, remembering the $U_{r-j}$ equivariance, we actually have
\begin{align*}
&\Hom_{P_{r-j}}((\Phi^-)^{j-(r-t)}\pi', [\Psi^+\rho\chi_s\Delta^{-1}]^{\widetilde{\ \ \ }})=\\
&\Hom_{P_{r-j}}((\Phi^+)^{r-j-1}(1), [\Psi^+\rho\chi_s\Delta^{-1}]^{\widetilde{\ \ \ }}) = 0
\end{align*}

Finally suppose $j=r-1$ (so in particular this is another sub-case of $r-j-1<t$).

Here we have $\rho = 1$. As before the space we are investigating becomes 
\begin{align*}
&\Hom_{P_{r-j}}((\Phi^-)^{j-(r-t)}\tau', [\Psi^+\rho\chi_s^{-1}\Delta^{-1}]^{\widetilde{\ \ \ }})=\\
&\Hom_k((\Phi^-)^{t-1}\tau', \chi_s^{-1}\Delta^{-1})
\end{align*}
Since $\tau'$ is irreducible, $\dim_k(\tau')^{(t)}\leq 1$. 

If $\Hom_{G_t}((\Phi^-)^{r-t-1}\pi,\widetilde{\tau' \chi_s})$ were nonzero, then (excepting finitely many values of $s$) the only possibility not yet excluded is the sub-case $j=r-1$. In this situation, $\tau'$ is generic and the space has dimension exactly 1. Converseley, if $\tau'$ and $\rho$ are generic then $j=r-1$ and $(\Phi^-)^{t-1}\tau'=(\tau')^{(t)} = k$, so $\Hom_k((\Phi^-)^{t-1}\tau', \chi_s^{-1}\Delta^{-1})$ is one dimensional.
\end{proof}

\section{The relation between $\hat{\Phi}^+$ and $\Res^{P_{t+1}}_{G_t}$}

We now prove the result regarding the relation between $\hat{\Phi}^+ = \Ind_{P_tU_{t+1}}^{P_{t+1}}$ and $\Res^{P_{t+1}}_{G_t}$, which was used for the $r-t-1<j$ case of the proof of Lemma \ref{hardlemma}.

\begin{lemma}
\label{phiplusandrestriction}
For $\sigma$ in $\Rep(P_t)$,  the map $\hat{\Phi}^+(\sigma) \rightarrow \Ind_{P_t}^{G_t}(\sigma)$ given by restricting functions to $G_t$ is an isomorphism of $G_t$-representations.
\end{lemma}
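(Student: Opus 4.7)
The plan is to exploit the semidirect-product decomposition $P_{t+1} = G_t \ltimes U_{t+1}$, from which two facts follow immediately: first, $P_t U_{t+1}$ is a subgroup of $P_{t+1}$ (since $G_t$ normalizes $U_{t+1}$), and second, the inclusion $G_t \hookrightarrow P_{t+1}$ induces a bijection of coset spaces $P_t\backslash G_t \isomto (P_t U_{t+1})\backslash P_{t+1}$, because $(P_t U_{t+1}) \cap G_t = P_t$ and $(P_t U_{t+1}) \cdot G_t = P_{t+1}$. This already identifies the underlying coset spaces on which the two induced representations live, so I expect the restriction-of-functions map to be the required isomorphism.

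I would then verify well-definedness and injectivity directly. For $F \in \hat{\Phi}^+(\sigma)$, the restriction $F|_{G_t}$ satisfies $F(pg) = \sigma(p) F(g)$ for $p \in P_t$ and $g \in G_t$ (since the character $\psi$ is trivially evaluated on the $P_t$ factor of $P_t U_{t+1}$), so $F|_{G_t}$ lies in $\Ind_{P_t}^{G_t}(\sigma)$. Injectivity follows from the other decomposition $P_{t+1} = U_{t+1} \cdot G_t$: if $F|_{G_t} = 0$, then for any $u \in U_{t+1}$ and $g \in G_t$ we have $F(ug) = \psi(u) F(g) = 0$ by the left $U_{t+1}$-equivariance of $F$.

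For surjectivity, given $f \in \Ind_{P_t}^{G_t}(\sigma)$ I would define $F \colon P_{t+1} \to \sigma$ by $F(ug) = \psi(u) f(g)$, using the unique decomposition $P_{t+1} = U_{t+1} G_t$. Smoothness of $F$ follows from that of $f$. Left $P_t U_{t+1}$-equivariance is checked by rewriting a product $(pv)(ug)$ as $u'' \cdot (pg)$ with $u'' = p(vu)p^{-1} \in U_{t+1}$, and then computing $F((pv)(ug)) = \psi(u'') f(pg) = \sigma(p) \psi(v) \psi(u) f(g)$, using the character property of $\psi$ together with the $P_t$-invariance of $\psi$ on $U_{t+1}$ under conjugation. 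Finally, $G_t$-equivariance for the right translation actions on both sides is immediate from the formula defining $F$.

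The only real technical subtlety is the $P_t$-invariance of $\psi$ on $U_{t+1}$; but this is precisely the condition that ensures $\sigma$ extends to a representation of $P_t U_{t+1}$ via $\psi$ in the first place, so it is already built into the definition of $\hat{\Phi}^+$. With that in hand, the proof reduces to careful bookkeeping with the product decomposition $P_{t+1} = G_t \ltimes U_{t+1}$ rather than any deep structural argument.
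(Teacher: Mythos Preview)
Your proof is correct and follows essentially the same strategy as the paper: both construct an explicit inverse to the restriction map by extending $f$ along the semidirect product decomposition of $P_{t+1}$, then verify the $P_tU_{t+1}$-equivariance (using that $P_t$ fixes $\psi$ on $U_{t+1}$ under conjugation) and the $G_t$-equivariance. The only cosmetic difference is that you use the factorization $P_{t+1}=U_{t+1}\cdot G_t$ and set $F(ug)=\psi(u)f(g)$, whereas the paper uses $P_{t+1}=G_t\cdot U_{t+1}$ and writes $f_\psi(gu)=\psi^g(u)f(g)$; your choice avoids the conjugated character and makes the bookkeeping marginally cleaner, but the content is identical.
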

\begin{proof}
We will construct a left inverse given by extending functions to $U_{t+1}\subset P_{t+1}$. Given any element $f$ of  $\Ind_{P_t}^{G_t}(\sigma)$ we show there exists a unique element $f_{\psi}$ of $\Ind_{P_tU_{t+1}}^{P_{t+1}}(\sigma)$ satisfying $f_{\psi}|_{G_t}\equiv f$.

If we want to extend a function on $G_t$ to $P_{t+1}=G_tU_{t+1}$, and we want it to live in $\hat{\Phi}^+(\sigma)$, then we are left with no choice. Let $p=\left(\begin{smallmatrix}g&0\\0&1\end{smallmatrix}\right)\left(\begin{smallmatrix}& & & x_1\\&I_t&&\vdots\\ &&&x_t\\0&\cdots&0&1\end{smallmatrix}\right)$ be an element of $P_{t+1}$. Given $f\in \Ind_{P_t}^{G_t}(\sigma)$, we define its extension $f_{\psi}$ as follows:

\begin{align*}
f_{\psi}(p) &= f_{\psi}\left((\begin{smallmatrix}g&0\\0&1\end{smallmatrix})(\begin{smallmatrix}I_t& x\\0&1\end{smallmatrix})\right)\\
&= f_{\psi}\left((\begin{smallmatrix}g&0\\0&1\end{smallmatrix})(\begin{smallmatrix}I_t& x\\0&1\end{smallmatrix})(\begin{smallmatrix}g^{-1}&0\\0&1\end{smallmatrix})(\begin{smallmatrix}g&0\\0&1\end{smallmatrix})\right)\\
&=f_{\psi}\left((\begin{smallmatrix}I_t& gx\\0&1\end{smallmatrix})(\begin{smallmatrix}g&0\\0&1\end{smallmatrix})\right)\\
&=\psi\left(\begin{smallmatrix}I_t& gx\\0&1\end{smallmatrix}\right)f(g)\\
&=\psi(g_{t,1}x_1+\cdots + g_{t,t}x_t)f(g)
\end{align*}

When dealing with characters $\psi$ on $U_{t+1}$, the conjugation action of $G_t$ on $U_{t+1}$ gives an action of $G_t$ on $\widehat{U_{t+1}}$, which we will denote with a superscript $\psi^g$. In this notation, we can write the extension more succinctly:
\begin{equation}
\label{definingeqn}
f_{\psi}\left((\begin{smallmatrix}g&0\\0&1\end{smallmatrix})(\begin{smallmatrix}I_t& x\\0&1\end{smallmatrix})\right) := \psi^g(\begin{smallmatrix}I_t& x\\0&1\end{smallmatrix})f(g).
\end{equation}

We need to show $f_{\psi}$ defines an element of $\Ind_{P_tU_{t+1}}^{P_{t+1}}(\sigma)$. Recall that $P_t$ is the subgroup of $G_t$ that normalizes characters defined on the bottom entry of $U_{t+1}$, in particular $\psi^h \equiv \psi$ on $U_{t+1}$ if $h\in P_t$. Thus, using equation \ref{definingeqn} we have for $h\in P_t$,
\begin{align*}
f_{\psi}(hp) &=\psi^{hg}(\begin{smallmatrix}I_t & x\\0&1\end{smallmatrix})f(\begin{smallmatrix}hg & 0\\0&1\end{smallmatrix})\\
&= \psi^g(\begin{smallmatrix}I_t & x\\0&1\end{smallmatrix})\sigma(h)f(g)\\
&=\sigma(h)f_{\psi}(p),
\end{align*}
and for $v = (\begin{smallmatrix}1&y\\0&1\end{smallmatrix})\in U_{t+1}$ we have 
\begin{align*}
f_{\psi}(vp)&=f_{\psi}\left((\begin{smallmatrix}I_t&y\\0&1 \end{smallmatrix})(\begin{smallmatrix}g&0\\0&1 \end{smallmatrix})(\begin{smallmatrix}I_t&x\\0&1 \end{smallmatrix})\right)\\
&=f_{\psi}\left((\begin{smallmatrix}g&0\\0&1 \end{smallmatrix})(\begin{smallmatrix}I_t&x+g^{-1}y\\0&1 \end{smallmatrix})\right)\\
&=\psi^g(\begin{smallmatrix}I_t&x+g^{-1}y\\0&1 \end{smallmatrix})f(g)\\
&=\psi(v)f_{\psi}(p).
\end{align*}

Since $f_{\psi}(g)=f(g)$ for $g\in G_t$, we have defined a right inverse of the restriction of functions map.

We need to show it is $G_t$-equivariant with respect to the right translation actions, which we will denote by $\rho$ for both $\hat{\Phi}^+(\sigma)$ and $\Ind_{P_t}^{G_t}$. By the following calculation, $(\rho(g')f)_{\psi}\equiv \rho(\begin{smallmatrix}g'&0\\0&1\end{smallmatrix})(f_{\psi})$, proving that our map is $G_t$-equivariant:
\begin{align*}
(\rho(g')f)_{\psi}(p)&=(\rho(g')f)_{\psi}\left((\begin{smallmatrix}g&0\\0&1\end{smallmatrix})(\begin{smallmatrix}I_t& x\\0&1\end{smallmatrix})\right)\\
&=\psi^g(\begin{smallmatrix}I_t& x\\0&1\end{smallmatrix})(\rho(g')f)(g)\\
&=\psi^g(\begin{smallmatrix}I_t& x\\0&1\end{smallmatrix})f(gg')\\
&=\psi^{gg'}(\begin{smallmatrix}I_t & (g')^{-1}x\\0&1\end{smallmatrix})f(gg')\\
&= f_{\psi}\left((\begin{smallmatrix}gg' & 0\\0&1\end{smallmatrix})(\begin{smallmatrix}I_t & (g')^{-1}x\\0&1\end{smallmatrix})\right)\\
&=f_{\psi}\left((\begin{smallmatrix}g&0\\0&1 \end{smallmatrix})(\begin{smallmatrix}I_t&x\\0&1 \end{smallmatrix})(\begin{smallmatrix}g'&0\\0&1\end{smallmatrix})\right)\\
&=f_{\psi}(pg')\\
&=\rho(\begin{smallmatrix}g'&0\\0&1\end{smallmatrix})(f_{\psi})(p)\text{, for all } p=(\begin{smallmatrix}g&0\\0&1\end{smallmatrix})(\begin{smallmatrix}I_t& x\\0&1\end{smallmatrix})\in P_{t+1}
\end{align*}
\end{proof}
By definition, applying $\Phi^-$ to a $G_t$ representation entails first restricting to $P_t$ and then applying the functor $\Phi^-:\Rep(P_t)\rightarrow \Rep(P_{t-1})$. In fact, it is equivalent to compactly induce to $P_{t+1}$ and apply $\Phi^-$ twice:
\begin{cor}
\label{phiandinduction}
For $\tau'\in \Rep(G_t)$, $(\Phi^-)^{m+1}(\cInd_{G_t}^{P_{t+1}}\tau')\cong (\Phi^-)^m(\tau')$ for $0\leq m\leq t$.
\end{cor}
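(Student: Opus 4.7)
The plan is to reduce to the case $m=0$ and then apply Yoneda via a chain of adjunctions. Indeed, once we have the $P_t$-equivariant isomorphism $\Phi^-(\cInd_{G_t}^{P_{t+1}}\tau')\cong \tau'|_{P_t}$, the full statement follows by applying $(\Phi^-)^m$ to both sides for $0\leq m\leq t$, since by convention $(\Phi^-)^m\tau'$ means $(\Phi^-)^m(\tau'|_{P_t})$ and $(\Phi^-)^m$ is a functor.

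For the $m=0$ case, I would not compute $\Phi^-(\cInd_{G_t}^{P_{t+1}}\tau')$ on the nose, but instead test it against an arbitrary $\sigma\in\Rep_k(P_t)$ and exploit the adjunction $\Phi^-\dashv \hat{\Phi}^+$ from \cite{b-zI} together with Lemma \ref{phiplusandrestriction}. The calculation reads
\begin{align*}
\Hom_{P_t}(\Phi^-(\cInd_{G_t}^{P_{t+1}}\tau'),\sigma)
&\cong \Hom_{P_{t+1}}(\cInd_{G_t}^{P_{t+1}}\tau', \hat{\Phi}^+\sigma) \\
&\cong \Hom_{G_t}(\tau', \Res^{P_{t+1}}_{G_t}\hat{\Phi}^+\sigma) \\
&\cong \Hom_{G_t}(\tau', \Ind_{P_t}^{G_t}\sigma) \\
&\cong \Hom_{P_t}(\tau'|_{P_t}, \sigma),
\end{align*}
where the first step is the $\Phi^-\dashv\hat{\Phi}^+$ adjunction, the second is Frobenius reciprocity for compact induction, the third is precisely Lemma \ref{phiplusandrestriction}, and the fourth is ordinary Frobenius reciprocity for $\Ind$. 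Because each isomorphism is natural in $\sigma$, Yoneda's lemma delivers the desired $P_t$-equivariant isomorphism.

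The main technical point I expect to be delicate is verifying that the Frobenius reciprocity for $\cInd$ from the closed, non-open subgroup $G_t\subset P_{t+1}$ goes through in the conventions of \cite{b-z,b-zI} without leaving a stray modulus character. One needs the normalizations of $\Phi^-$, $\hat{\Phi}^+$, and $\cInd_{G_t}^{P_{t+1}}$ used in the paper to be compatible with the one implicit in Lemma \ref{phiplusandrestriction}; once this bookkeeping is done, the adjunction argument collapses the whole corollary to a single line. If one preferred to sidestep the adjunction formalism, an alternative would be to compute $\Phi^-(\cInd_{G_t}^{P_{t+1}}\tau')$ by hand: identify the underlying space with $C_c^\infty(U_{t+1},\tau')$ via $p=gu\mapsto u$, take $(U_{t+1},\psi)$-coinvariants by Fourier integration against $\psi^{-1}$, and check the resulting $P_t$-action matches $\tau'|_{P_t}$ (using that $P_t$ stabilizes $\psi$).
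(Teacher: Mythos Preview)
Your proposal is correct and is essentially the paper's own proof: both reduce to $m=0$, run the same chain of adjunctions through Lemma~\ref{phiplusandrestriction}, and finish with Yoneda. The only cosmetic difference is that the paper collapses your first two isomorphisms (the $\Phi^-\dashv\hat{\Phi}^+$ adjunction and Frobenius reciprocity for $\cInd$) into a single step, writing $\Hom_{P_t}(\Phi^-\cInd_{G_t}^{P_{t+1}}\tau',\sigma)=\Hom_{G_t}(\tau',\Res^{P_{t+1}}_{G_t}\hat{\Phi}^+(\sigma))$ directly.
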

\begin{proof}
We take $(\Phi^-)^0$ to equal $\Res^{G_t}_{P_t}$. It suffices to prove the corollary for $m=0$. By Frobenius reciprocity and by Lemma \ref{phiplusandrestriction} below, we have the following isomorphisms for any $\sigma$ in $\Rep(P_t)$: 
\begin{align*}
\Hom_{P_t}(\Phi^-\cInd_{G_t}^{P_{t+1}}\tau',\sigma)&=\Hom_{G_t}(\tau',\Res^{P_{t+1}}_{G_t}\hat{\Phi}^+(\sigma))\\
&=\Hom_{G_t}(\tau',\Ind_{P_t}^{G_t}(\sigma))\\
&=\Hom_{P_t}(\Res^{G_t}_{P_t}\tau',\sigma),
\end{align*}
hence $\Phi^-(\cInd_{G_t}^{P_{t+1}}\tau')\cong \Res^{G_t}_{P_t}\tau'$ by the Yoneda lemma.
\end{proof}

In particular, taking $m=t$ we have $(\cInd_{G_t}^{P_{t+1}}\tau')^{(t+1)}\cong (\tau')^{(t)}$.

\begin{rmk}
The proof of Lemma \ref{phiplusandrestriction} shows that restriction of functions gives a $G_t$ isomorphism from the Kirillov space $\Ind_{N_{t+1}}^{P_{t+1}}\psi$ to the Whittaker space $\Ind_{N_t}^{G_t}\psi$ (typically the dual result appears for restriction of functions from $G_t$ to $P_t$).
\end{rmk}

\bibliography{mybibliography}{}
\bibliographystyle{alpha}
\end{document}